\newcommand{\Z}{\mathbb{Z}}
\newtheorem{theorem}{Theorem}
\newtheorem{corollary}{Corollary}[theorem]
\newtheorem{proposition}{Proposition}
\newtheorem{example}{Example}
\theoremstyle{definition}
\theoremstyle{remark}
\definecolor{darkgreen}{rgb}{0,0.5,0}
\begin{document}

\title{A lower bound limiting solutions in the hyperbolic case of the generalized Fermat equation}

\subjclass[2020]{Primary 11D41}

\author{Bruce Zimov}
\address{Calimesa Research Institute,
		 33562 Yucaipa Blvd 4-321,
		 Yucaipa, CA 92399,
		 USA	
         }
\email{katcha997@aol.com}

\date{December 5, 2020}

\begin{abstract}
  We find a lower bound for $\displaystyle{\chi=\frac{1}{p}+\frac{1}{q}+\frac{1}{r}}$ limiting any solution in the hyperbolic case of the Generalized Fermat Equation $x^p+y^q=z^r$.
\end{abstract}

\maketitle


\section{Introduction}

 Let $p,q,r,x,y,z \in \Z$, $(x,y,z)=1$, and $x,y,z\geq 2$. The hyperbolic case of the Generalized Fermat Equation is
\begin{center}
	$x^p + y^q = z^r$
\end{center}
with
\begin{center}
	$\displaystyle \frac{1}{p}+\frac{1}{q}+\frac{1}{r}<1$.
\end{center}
The absence of solutions for $p,q,r\geq3$ has been recently surveyed in~\cite{BCDY}.
A lower bound for $\displaystyle \frac{1}{p}+\frac{1}{q}+\frac{1}{r}$ means that there are no solutions to the Generalized Fermat Equation with exponents $(p,q,r)$ below this lower bound.
Assuming Baker's Explicit $abc$-conjecture, theorems have been proven with explicit lower bounds for $\displaystyle \frac{1}{p}+\frac{1}{q}+\frac{1}{r}$.

In particular, Laishram and Shorey ~\cite{LS} showed that
\begin{center}
	$\displaystyle \frac{4}{7}<\frac{1}{p}+\frac{1}{q}+\frac{1}{r}$
\end{center}
Chim Shorey and Sinha ~\cite{CSS} improved this to
\begin{center}
	$\displaystyle \frac{1}{1.72}<\frac{1}{p}+\frac{1}{q}+\frac{1}{r}$
\end{center}
Denote the greatest square-free factor of $xyz$ as $G=G(x,y,z)=\prod\limits_{p|xyz} p$, the radical of $xyz$, where $p$ is a prime. Without relying on any conjecture, Stewart and Yu~\cite{SY} proved the following:
There exists an effectively computable positive constant $c$ such that for all positive integers $x$,$y$, and $z$ with $(x,y,z)=1$, $z>2$, and $x+y=z$,
\begin{center}
$\displaystyle log (z)<G^{\displaystyle 2/3+\frac{c}{log~log(G)}}$
\end{center}
Stewart and Yu~\cite{SYY}\cite{SY3} subsequently improved the upper bound to 
\begin{equation}\label{E:SY}
\displaystyle {log (z)<G^{\displaystyle 1/3+\frac{c}{log~log(G)}}}
\end{equation}
Wong Chi Ho~\cite{H} obtained $c=15$ for the effectively computable constant in the upper bound                                                                                                                                        . Using this result, we will obtain a lower bound for $\displaystyle{\frac{1}{p}+\frac{1}{q}+\frac{1}{r}}$ in the hyperbolic case of the Generalized Fermat Equation without relying on any conjecture.  In this paper, we will prove the following:

\begin{theorem}\label{T:GFE}
  Let $p,q,r,x,y,z \in \Z$.
  
  If $(x,y,z)=1$, $x,y,z\geq 2$, and $p,q,r \geq 3$, 
  then the equation
  \begin{equation} \label{E:GFE}
    x^p + y^q = z^r
  \end{equation}
  has no solutions with
\begin{equation}\label{E:PG}
\displaystyle\frac{3\cdot log~log(z^r)}{log(z^r)}\displaystyle{\frac{1}{\bigg(1+\displaystyle{\frac{45}{log~log(G)}}\bigg)}}< \frac{1}{p}+\frac{1}{q}+\frac{1}{r}
\end{equation}  
where $G=G(x,y,z)$.
\end{theorem}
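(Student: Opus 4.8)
The plan is to argue by contradiction: assume that \eqref{E:GFE} admits a solution $(x,y,z,p,q,r)$ under the stated hypotheses for which \eqref{E:PG} holds, and then extract a contradiction from the Stewart--Yu bound \eqref{E:SY}. First I would record that $(x,y,z)=1$ together with \eqref{E:GFE} forces $x,y,z$ to be pairwise coprime: a prime $\ell$ dividing two of them would, through the equation, divide the third, and hence divide $\gcd(x,y,z)=1$, a contradiction. Setting $X=x^{p}$, $Y=y^{q}$, $Z=z^{r}$ I obtain $X+Y=Z$ with $(X,Y,Z)=1$ and $Z=z^{r}\ge 2^{3}>2$, while $x^{p}y^{q}z^{r}$ has exactly the prime divisors of $xyz$, so the radical appearing in \eqref{E:SY} is again $G$. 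This is precisely the configuration to which \eqref{E:SY} applies, with the constant $c=15$ of Wong Chi Ho.

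The elementary half of the argument is an upper bound on $G$ in terms of $z^{r}$ and $\chi:=\tfrac1p+\tfrac1q+\tfrac1r$. Since $z^{r}$ is the largest of the three terms, $x^{p}<z^{r}$ and $y^{q}<z^{r}$, whence $x<z^{r/p}$ and $y<z^{r/q}$; multiplying gives
\[
xyz<z^{\,r/p+r/q+1}=z^{\,r\chi}=(z^{r})^{\chi},
\]
and since $G$ is the product of the distinct primes dividing $xyz$ we have $G\le xyz$, so $\log G<\chi\,\log(z^{r})$. For the analytic half I would feed the assumed solution into \eqref{E:SY}, that is $\log(z^{r})<G^{1/3+15/\log\log G}$; taking logarithms and rearranging yields the lower bound
\[
\log G>\frac{3\,\log\log(z^{r})}{1+\dfrac{45}{\log\log G}}.
\]
Chaining the two estimates on $\log G$ and dividing by $\log(z^{r})$ then ties $\chi$ directly to the quantity on the left of \eqref{E:PG}, and it is this relation that I would play against the standing assumption that \eqref{E:PG} holds in order to close the argument.

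The main obstacle is exactly the bookkeeping of inequality directions in this final collision: one must show that the Stewart--Yu lower bound on $\log G$ and the elementary upper bound $\log G<\chi\,\log(z^{r})$ can be reconciled only outside the range that \eqref{E:PG} describes, so that no genuine solution can in fact satisfy \eqref{E:PG}. Carrying this out rigorously requires $G$ to be large enough that $\log\log G>0$ (which holds since three pairwise coprime integers $\ge 2$ involve at least three distinct primes, whence $G\ge 2\cdot 3\cdot 5=30$), careful monotonic control of the factor $1/(1+45/\log\log G)$ as $G$ and $z^{r}$ vary, and use of the hyperbolic hypothesis $\chi<1$ to delimit the admissible region. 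Tracking these monotonicities so that the chained inequalities resolve in the claimed direction, rather than its reverse, is the delicate point and the crux of the whole proof.
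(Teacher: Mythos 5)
Your two estimates are exactly the right ingredients, and both are derived correctly: from Wong's explicit version of \eqref{E:SY} one gets
\[
\log G \;>\; \frac{3\,\log\log(z^{r})}{1+\dfrac{45}{\log\log G}},
\]
and from $G\le xyz<z^{r\chi}$ one gets $\log G<\chi\,\log(z^{r})$. But the final step of your plan --- playing the chained inequality against the standing assumption \eqref{E:PG} --- cannot be carried out, and no amount of monotonicity bookkeeping will fix it. Chaining the two displays and dividing by $\log(z^{r})$ yields precisely
\[
\frac{3\,\log\log(z^{r})}{\log(z^{r})}\cdot\frac{1}{1+\dfrac{45}{\log\log G}}\;<\;\chi,
\]
which \emph{is} the inequality \eqref{E:PG} you assumed for contradiction. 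Re-deriving one's hypothesis is not a contradiction, so the argument closes nothing. Worse, the computation shows that \emph{every} solution of \eqref{E:GFE} under the stated hypotheses satisfies \eqref{E:PG}; consequently the statement read literally (``no solutions satisfying \eqref{E:PG}'') is equivalent to the nonexistence of any solution at all, i.e.\ to the Beal--Tijdeman--Zagier conjecture, which these estimates certainly do not prove. The difficulty you deferred as ``the delicate point and the crux of the whole proof'' is therefore not delicate bookkeeping: it is an impossibility.

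What the paper actually does is a direct argument, not a proof by contradiction: it performs exactly your chaining for an arbitrary solution (substituting $G<z^{r\chi}$ into the right-hand side of Wong's bound and taking logarithms) and concludes that every solution satisfies the displayed inequality, hence that ``there is no solution for $\chi$ below this lower bound.'' That is the provable content, and it is the \emph{reverse} of the inequality printed in the theorem: the paper's own statement \eqref{E:PG} has its inequality backwards relative to its proof and to the notion of ``lower bound'' announced in the introduction. So your estimates are complete and correct, and the gap in your proposal is purely one of logical direction --- a direction in which, as it happens, no proof exists. Had you instead set out to prove ``no solution has $\chi$ below the displayed bound'' (equivalently, ``every solution satisfies \eqref{E:PG}''), your two estimates, chained as above, would already constitute the entire proof, and it would coincide with the paper's.
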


\begin{center}
\end{center}

\section{Proof of Theorem ~\ref{T:GFE}}
\label{sec:proof}

\begin{proof}
  
  From (\ref{E:GFE}) above,
  \begin{center}
   $x<z^{\displaystyle r/p},y<z^{\displaystyle r/q}$.
  \end{center}
  By the Main Theorem of Wong's thesis~\cite{H}, we have
  \begin{center}
$log (z^{\displaystyle r})<G^{\displaystyle 1/3+\frac{\displaystyle 15}{\displaystyle log~log(G)}}$
\end{center} 
We note that ${\displaystyle G=G(x,y,z)\leq xyz<z^{\displaystyle r\chi}}$ where $\displaystyle{\chi=\frac{1}{p}+\frac{1}{q}+\frac{1}{r}}$.
Then,
  \begin{center}
$log (z^{\displaystyle r})<z^{{\displaystyle r\chi}\bigg(\displaystyle{1/3+\frac{15}{log~log(G)}}\bigg)}$
\end{center} 
  \begin{center}
${\displaystyle\frac{log~log(z^r)}{log(z^r)}<\chi\bigg(1/3+\frac{15}{log~log(G)}\bigg)}$
\end{center}
\begin{center}
$\displaystyle\frac{3\cdot log~log(z^r)}{log(z^r)}\displaystyle{\frac{1}{\bigg(1+\displaystyle{\frac{45}{log~log(G)}}\bigg)}}<\chi$
\end{center}

There is no solution to (\ref{E:GFE}) for $\chi$ below this lower bound. Hence, the assertion. 
\end{proof}

\section{Corollaries}

\subsection{Negative lower bound}

\begin{proposition}
If the lower bound to $\chi$ is negative, then no $(p,q,r)$ are excluded for solving (\ref{E:GFE}) .
\end{proposition}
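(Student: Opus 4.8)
The plan is to reduce the statement to the single elementary observation that $\chi$ is strictly positive, so that a negative lower bound can never be a binding constraint. Write $L$ for the left-hand side of~(\ref{E:PG}), the lower bound furnished by Theorem~\ref{T:GFE}. As fixed in the Introduction, a lower bound $L$ for $\chi$ excludes exactly those exponent triples $(p,q,r)$ whose value of $\chi$ lies below $L$; this is the contrapositive of the implication established in the proof of Theorem~\ref{T:GFE}, namely that any solution of~(\ref{E:GFE}) forces $L<\chi$. Hence a triple $(p,q,r)$ is excluded only when $\chi\le L$, and to prove the proposition it suffices to show that this can never occur once $L<0$.

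First I would record that for admissible exponents $p,q,r\ge 3$ one has
\[
  \chi=\frac{1}{p}+\frac{1}{q}+\frac{1}{r}>0,
\]
since each summand is the reciprocal of a positive integer. Then, assuming the lower bound is negative, i.e.\ $L<0$, I would chain the two inequalities to obtain $\chi>0>L$ for every such triple, so that the exclusion condition $\chi\le L$ (and a fortiori $\chi<L$) fails identically. Consequently no triple $(p,q,r)$ is ruled out, which is precisely the assertion.

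There is no genuine obstacle here: the argument rests only on the sign of $\chi$ together with the correct orientation of the defining inequality for a lower bound. The one point that deserves care is to match that orientation to the convention fixed in the Introduction — exclusion takes place \emph{below} the bound — after which the conclusion is immediate and requires no computation.
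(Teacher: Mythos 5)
Your proposal is correct and follows essentially the same route as the paper: both arguments rest solely on the observation that $\chi=\frac{1}{p}+\frac{1}{q}+\frac{1}{r}>0$, so no triple can satisfy the exclusion condition $\chi\le L$ when $L<0$. Your version merely spells out more explicitly the convention that ``excluded'' means lying below the bound, which the paper leaves implicit.
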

\begin{proof}
$z$,$r$,$\chi$, and $G$ are positive. There are no $\chi$ values below any negative lower bound. Hence, no $(p,q,r)$ are excluded for solving $(\ref{E:GFE})$ in this case.
\end{proof}
\begin{example}Negative lower bound for $\chi$.
\end{example}
Stewart and Yu~\cite{SY3} further improved the upper bound of (\ref{E:SY}) to  
\begin{center}
$\displaystyle {z<e^{c\cdot G^{1/3}(log(G))^3}}$
\end{center}
Applied to (\ref{E:GFE}), we get
\begin{equation}
\displaystyle {log (z^r)<c\cdot G^{1/3}(log(G))^3}\label{E:SY1}
\end{equation}
We proceed with $G<z^{r\chi}$ as in our proof of Theorem \ref{T:GFE} above.
\begin{center}
$\displaystyle {log (z^r)<c\cdot z^{r\chi/3}(log(z^{r\chi}))^3}$

$\displaystyle {log \bigg(\frac{log (z^r)}{c}\bigg) < \frac{\chi}{3}\cdot log(z^r)+3\cdot log~log(z^{r\chi})}$

$\displaystyle {log~log(z^r)-log(c) < \frac{\chi}{3}\cdot log(z^r)+3\cdot log(\chi)+3\cdot log~log(z^r)}$

$\displaystyle {-(2\cdot log~log(z^r)+log(c))\cdot \frac{3}{log(z^r)} < \chi+9\cdot \frac{log(\chi)}{log(z^r)}}<\chi\cdot \bigg(1+\frac{9}{log(z^r)}\bigg)$

$\displaystyle {\frac{-3\cdot (2\cdot log~log(z^r)+log(c))}{log(z^r)+9} < \chi}$
\end{center}
Clearly the left hand side is negative unless
\begin{center}
$0<r<\displaystyle{\frac{1}{\sqrt{c}~log(z)}}$
\end{center}

Chim Kwok Chi~\cite{C} obtained $c=e^{2.6\times 10^{44}}$ for the effectively computable constant. Thus,
\begin{center}
$0<r<\displaystyle{\frac{1}{\sqrt{c}~log(z)}}<3$
\end{center}
Hence,by the proposition, no $(p,q,r)$ are excluded for solving (\ref{E:GFE}) given (\ref{E:SY1}).
\subsection{Boundedness of G}
\begin{corollary}

Let $\phi(x)=\displaystyle\frac{3\cdot log~log(x)}{log(x)}$ and $\displaystyle{\chi=\frac{1}{p}+\frac{1}{q}+\frac{1}{r}}$.
\begin{equation}\label{E:C}
log~log(G)<\displaystyle\frac{45}{\displaystyle\frac{\phi(z^r)}{\chi}-1}
\end{equation}
\end{corollary}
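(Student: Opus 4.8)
The plan is to obtain (\ref{E:C}) as a direct algebraic rearrangement of the inequality (\ref{E:PG}) already established in the proof of Theorem~\ref{T:GFE}. By definition $\phi(z^r)=\frac{3\cdot log~log(z^r)}{log(z^r)}$, so (\ref{E:PG}) reads compactly as
\[
\frac{\phi(z^r)}{1+\frac{45}{log~log(G)}}<\chi .
\]
The whole corollary is then a matter of solving this single inequality for $log~log(G)$.

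First I would clear the outer denominator. Multiplying both sides by the positive quantity $1+\frac{45}{log~log(G)}$ and then dividing by $\chi>0$ gives
\[
\frac{\phi(z^r)}{\chi}<1+\frac{45}{log~log(G)} .
\]
Subtracting $1$ isolates the $G$-term on the right,
\[
\frac{\phi(z^r)}{\chi}-1<\frac{45}{log~log(G)} ,
\]
and finally taking reciprocals of both sides inverts the inequality and yields exactly
\[
log~log(G)<\frac{45}{\frac{\phi(z^r)}{\chi}-1} ,
\]
which is (\ref{E:C}).

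The one step that needs care, and which I expect to be the main obstacle, is the passage to reciprocals in the last line: it is valid (and produces a finite, positive upper bound) only when the denominator $\frac{\phi(z^r)}{\chi}-1$ is strictly positive, equivalently $\phi(z^r)>\chi$. Likewise, clearing the outer denominator in the first step tacitly uses $1+\frac{45}{log~log(G)}>0$, which holds once $G$ is large enough that $log~log(G)>0$. I would therefore carry out the argument under the standing assumption that these positivity conditions hold, which is the regime of genuine interest, since the point of the corollary is to bound $G$ from above when $G$ is large. When instead $\phi(z^r)\le\chi$, the right-hand side of (\ref{E:C}) is non-positive and the bound carries no content, consistent with the earlier Proposition on negative lower bounds, where no exponent triple $(p,q,r)$ is excluded.
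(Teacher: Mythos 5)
Your proposal is correct and follows exactly the paper's route: the paper's proof consists of the single line ``This is a consequence of (\ref{E:PG})'', and your algebraic rearrangement (clear the denominator, divide by $\chi$, subtract $1$, take reciprocals) is precisely the omitted computation. Your explicit caveat that the reciprocal step requires $\frac{\phi(z^r)}{\chi}-1>0$ is a point the paper only addresses informally in the discussion following the corollary (the cases $\frac{\phi(z^r)}{\chi}>1$, $=1$, $<1$), so your write-up is, if anything, slightly more careful than the original.
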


\begin{proof}
This is a consequence of (\ref{E:PG}).
\end{proof}
Consider the values of $\phi(z^r)$ as given in Table \ref{Tab:m(z)}. Observe that for $z^r>2^4$, $\phi(z^r)$ is a decreasing function. Consequently, $\phi(z^r)$ is at a maximum at a value of $1.1034$.
If $p$ and $q$ go to $\infty$,we have
\begin{center}
$\lim\limits_{p,q\to\infty} \displaystyle\frac{\phi(z^r)}{\chi}=\frac{3\cdot log~log(z^r)}{log(z)}$
\end{center}
For $p=q=3$ at their minimum, we have
\begin{center}
$\displaystyle\frac{\phi(z^r)}{\chi}=\frac{3}{2\cdot r+3}\cdot \frac{3\cdot log~log(z^r)}{log(z)}$
\end{center}

In general, we have
\begin{center}
$\displaystyle\frac{\phi(z^r)}{\chi}=\frac{p\cdot q}{(p+q)\cdot r+p\cdot q}\cdot \frac{3\cdot log~log(z^r)}{log(z)}$
\end{center}

If $\displaystyle\frac{\phi(z^r)}{\chi}>1$ we have an upper bound to $G$.

If $\displaystyle\frac{\phi(z^r)}{\chi}=1$ then $G$ is unbounded.

$\displaystyle\frac{\phi(z^r)}{\chi}<1$ is a contradiction because $G\geq 30=2\cdot3\cdot5$.

\begin{table}[b] \renewcommand{\arraystretch}{1.2}
\[ \begin{array}{|c|c|} \hline
  z^r & \phi(z^r)\hspace{1mm} \\\hline
  2^3 & 1.0562  \\\hline
  2^4 & 1.1034  \\\hline
  3^3 & 1.0856  \\\hline
  2^5 & 1.0759  \\\hline
  2^6 & 1.0281  \\\hline
  3^4 & 1.0106  \\\hline
  5^3 & 0.9783  \\\hline
  2^7 & 0.9765  \\\hline
  \end{array}\]

\caption{First $8$ values of $\phi(z^r)$. Calculations were done with the Java Math Library.}
\label{Tab:m(z)}
\end{table}
\newpage
\section*{Acknowledgement} The author would like to thank Cameron Stewart for his valuable comments on an earlier version of this paper.


\begin{bibdiv}
\begin{biblist}
\bib{BCDY}{article}{
  title = {Generalized Fermat equations: A miscellany},
  author = {Bennett, M.A.},
  author = {Chen, I.},
  author = {Dahmen, S.R.},
  author = {Yazdani, S.},
  year = {2015},
  volume = {11},
  pages = {1--28},
  journal = {International Journal of Number Theory},
  issn = {1793-0421},
  publisher = {World Scientific Publishing Co. Pte Ltd},
  number = {1},
  doi = {10.1142/S179304211530001X}
}
\bib{CSS}{article}{
  title={On Baker’s explicit abc-conjecture},
  author={Chim,Kwok Chi},
  author={Shorey, T.N.},
  author={Sinha, Sneh Bala},
  journal={Publicationes Mathematicae Debrecen},
  year={2019},
  volume={94},
  pages={435-453},
  doi={10.5486/pmd.2019.8397}
}
\bib{C}{thesis}{
    author={Chim, Kwok Chi},
    title={New explicit result related to the abc-conjecture},
    type={Masters thesis},
    address={Hong Kong University},
    eprint={https://lbezone.ust.hk/bib/b863988},
    date={2005},
}
\bib{H}{thesis}{
    author={Wong,Chi Ho},
    title={An explicit result related to the abc-conjecture},
    type={Masters thesis},
    address={Hong Kong University},
    eprint={https://lbezone.ust.hk/bib/b645947},
    date={1999},
}
\bib{LS}{article}{
   author={Laishram, Shanta},
   author={Shorey, T.N.},
   title={Baker's explicit abc-conjecture and applications},
   journal = {Acta Arithmetica},
   number = {4},
   pages = {419-429},
   url = {http://eudml.org/doc/279770},
   volume = {155},
   year = {2012},
   doi={10.4064/aa155-4-6}
}
\bib{SY}{article}{
   author={Stewart, C. L.},
   author={Yu, Kunrui},
   title={On the abc conjecture},
   journal={Mathematische Annalen},
   volume={291},
   date={1991},
   number={2},
   pages={225--230},
}
\bib{SYY}{misc}{
   author={Stewart, C. L.},
   author={Yu, Kunrui},
   title={On the abc conjecture II},
   note={Diophantine Geometry Related to the ABC Conjecture Workshop},
   address={University of Arizona},
   date={1998},
   }
\bib{SY3}{article}{
   author={Stewart, C. L.},
   author={Yu, Kunrui},
   title={On the abc conjecture II},
   journal={Duke Mathematical Journal},
   volume={108},
   date={2001},
   number={1},
   pages={169--181},
   doi={10.1215/S0012-7094-01-10815-6},
}
\end{biblist}
\end{bibdiv}

\end{document}